\DeclareMathSymbol{\shortminus}{\mathbin}{AMSa}{"39}
\theoremstyle{plain}
\newtheorem{theorem}{Theorem}[section]
\newtheorem{proposition}[theorem]{Proposition}
 \newtheorem{lemma}[theorem]{Lemma}
\theoremstyle{definition}
\newtheorem{remark}[theorem]{Remark}
\newtheorem{assumptions}[theorem]{Assumption}
 \newtheorem{example}[theorem]{Example}
 \newtheorem{definition}[theorem]{Definition}
\newtheorem*{proposition*}{Proposition}
\newtheorem*{definition*}{Definition}
\numberwithin{equation}{section}
\theoremstyle{plain}
\newtheorem*{theorem*}{Theorem}
\newenvironment{abc}{\begin{enumerate}[{\rm (a)}]}{\end{enumerate}}
\newenvironment{num}{\begin{enumerate}[{\rm 1.}]}{\end{enumerate}}
\newenvironment{iiv}{\begin{enumerate}[{\rm (i)}]}{\end{enumerate}}
\def\dom{\mathrm{D}}
\def\dd{\mathrm{d}}
\def\ee{\mathrm{e}}
\def\vv{\mathrm{v}}
\def\RR{\mathbb{R}}
\def\CC{\mathbb{C}}
\def\NN{\mathbb{N}} 
\def\LLL{\mathscr{L}}
\def\L{\mathrm{L}}
\def\BC{\mathrm{C}_{\mathrm{b}}}
\def\BC{\mathrm{C}_{\mathrm{b}}}
\def\Ell{\mathrm{L}}
\def\esssup{\mathop{\mathrm{ess}\,\sup}}
\def\diag{\mathrm{diag}}
\def\Cnull{\mathrm{C}_0}
\newcommand{\opnorm}{\@ifstar\@opnorms\@opnorm}
\newcommand{\@opnorms}[1]{%
  \left|\mkern-1.5mu\left|\mkern-1.5mu\left|
   #1
  \right|\mkern-1.5mu\right|\mkern-1.5mu\right|
}
\newcommand{\@opnorm}[2][]{%
  \mathopen{#1|\mkern-1.5mu#1|\mkern-1.5mu#1|}
  #2
  \mathclose{#1|\mkern-1.5mu#1|\mkern-1.5mu#1|}
}
\begin{document}
\title{A Lumer--Phillips type generation theorem for bi-continuous semigroups}

\author{Christian Budde\hspace{0.5pt}\MakeLowercase{$^{\text{1}}$} and Sven-Ake Wegner\hspace{0.5pt}\MakeLowercase{$^{\text{2}}$}}

\renewcommand{\thefootnote}{}
\hspace{-1000pt}\footnote{\hspace{5.5pt}2020 \emph{Mathematics Subject Classification}: Primary 47B44; Secondary: 47D03, 47D06, 34G10, 46A70, 46A03.}

\hspace{-1000pt}\footnote{\hspace{5.5pt}\emph{Key words and phrases}: Bi-continuous semigroups, Lumer--Phillips Theorem, dissipativity, evolution equation.\vspace{1.6pt}}

\hspace{-1000pt}\footnote{\hspace{0pt}$^{1}$\,University of the Free State, Department of Mathematics and Applied Mathematics, Faculty of Natural and\linebreak\phantom{x}\hspace{1.2pt}Agriculture Sciences, PO Box 339, Bloemfontein 9300, South Africa, E-Mail: buddecj@ufs.ac.za\vspace{1.6pt}}

\hspace{-1000pt}\footnote{\hspace{0pt}$^{2}$\,Corresponding author: Universit\"at Hamburg, Department of Mathematics, Bundesstra\ss{}e 55, 20146 Hamburg,\linebreak\phantom{x}\hspace{1.2pt}Germany, Phone:\hspace{1.2pt}\hspace{1.2pt}+49\hspace{1.2pt}40\hspace{1.2pt}/\hspace{1.2pt}42838\hspace{1.2pt}-\hspace{1.2pt}7657, Fax:\hspace{1.2pt}\hspace{1.2pt}+49\hspace{1.2pt}40\hspace{1.2pt}/\hspace{1.2pt}42838\hspace{1.2pt}-\hspace{1.2pt}5117 E-Mail: sven.wegner@uni-hamburg.de.\vspace{1.6pt}}

\begin{abstract}                  
The famous 1960s Lumer--Phillips Theorem states that a closed and densely defined operator $A\colon \dom(A)\subseteq X\rightarrow X$ on a Banach space $X$ generates a strongly continuous contraction semigroup if and only if $(A,\dom(A))$ is dissipative and the range of $\lambda-A$ is surjective in $X$ for some $\lambda>0$. In this paper, we establish a version of this result for bi-continuous semigroups and apply the latter amongst other examples to the transport equation as well as to flows on infinite networks.
\end{abstract}

\date{}
\maketitle

\section{Introduction}

In this paper we consider abstract Cauchy problems of the form
\begin{align}\label{eqn:ACP}\tag{ACP}
\begin{cases}
\dot{x}(t)=Ax(t) \text{ for } t\geq0,\\
x(0)=x_0&
\end{cases}
\end{align}
where $A\colon \dom(A)\subseteq X\rightarrow X$ is an unbounded linear operator on a Banach space $X$ and $x_0\in X$ is some initial condition. Typical examples for $A$ are expressions involving spacial derivatives or multiplication operators on function spaces and \eqref{eqn:ACP} then models a partial differential equation. The classical operator theoretic approach to \eqref{eqn:ACP} is the method of strongly continuous semigroups. The latter provides tools to establish if $(A,\dom(A))$ is the generator of a so-called $\Cnull$-semigroup, i.e., if there exists a map $T\colon[0,\infty)\rightarrow L(X)$ satisfying the evolution property $T(0)=I_X$ and $T(t+s)=T(t)T(s)$ for $s$, $t\geq0$, such that all orbits $t\mapsto T(t)x$ are continuous, and such that $Ax=\lim_{t\rightarrow0}\frac{1}{t}(T(t)x-x)$ holds for all $x\in\dom(A)$. If this is the case, then $T$ provides, for every initial value $x_0\in X$, the unique (mild) solution of \eqref{eqn:ACP}. Having established the generator property, an extensive machinery becomes available to study qualitative and quantitative properties of the solutions; see the monographs \cite{EN, G2017, P1983} for details.

\medskip
Although the theory of $\Cnull$-semigroups has many applications, of course it does not apply to any evolution equation on any space. A well-known issue is that for some operators $(A,\dom(A))$ there is indeed a semigroup $T\colon[0,\infty)\rightarrow L(X)$ but its orbit maps fail to be continuous. A prototype example for this effect is the transport equation, i.e., $A=\frac{\dd}{\dd\xi}$, on the space $(\operatorname{C}_{\operatorname{b}}(\mathbb{R}),\|\cdot\|_{\infty})$ of bounded continuous functions on the real line. Whereas the classical theory works perfectly on the subspace of uniformly continuous functions, a quite natural expansion of the space destroys the strong continuity of the shift semigroup which gives the solutions to the corresponding Cauchy problem. A natural fix for this lack of continuity is to endow the space with a coarser topology. Here, for example the topology of pointwise convergence or compact convergence. The latter leads to the study of strongly continuous semigroups on locally convex vector spaces. Early results in this direction can be found, e.g., in Yosida's book \cite{Yosida}, or in papers by Miyadera \cite{Miyadera} from around 1960. Since then many fundamental $\Cnull$-results have been established analogously in this setting, see e.g., \cite{B1974, K1964, K1968,O1973}. Recent papers treat generation results for contraction semigroups, series representations, stability theory, interpolation and extrapolation spaces, and perturbation theory \cite{AJ2016, FJKW2014, GW2016, JWW2015, W2014, W2016}.


\medskip
Considering general locally convex spaces widens the class of function spaces that can be taken for $X$ a lot and covers also many non-normable spaces like those formed by test functions, Schwartz functions, or distributions. On the other hand the aforementioned example of the shift on $\operatorname{C}_{\operatorname{b}}(\mathbb{R})$ is indeed defined on a Banach space and the same holds true also for other examples, e.g., for Ornstein--Uhlenbeck processes or Feller processes, see \cite{LB2007}. The method of choice in these cases is the concept of so-called bi-continuous semigroups which was introduced by K\"{u}hnemund \cite{Ku} and amalgamates the norm topology and a weaker locally convex topology in a sense that we will make precise in Section \ref{SEC:1}.

\medskip
Since its outset in 2001, the theory of bi-continuous semigroups has grown and by now includes a Hille-Yosida type generation theorem, an approximation theory as well as a perturbation theory including inter- and extrapolation spaces \cite{AM2004,BPos,BF,BF2,ESF2006,F2004, F2004Semi}. In this paper we add another piece to the puzzle by establishing a Lumer--Phillips type generation theorem for contraction semigroups. This is in particular desirable since in the $\Cnull$-setting the Lumer--Phillips theorem, see \cite[Chapter II, Thm.~3.15]{EN}, is one of the most important tools to prove that a given operator is a generator. Moreover, a Lumer--Phillips theorem for (equicontinuous) semigroups on general locally convex spaces has been established recently by Albanese, Jornet \cite{AJ2016}. To demonstrate the applicability of our theorem we treat the prototype example from above, where we now are able to prove the generation property without using a priori knowledge on the semigroup. The latter was not possible beforehand. Our whole last chapter is then devoted to applications treating flows on infinite networks.

\section{Bi-continuous semigroups}\label{SEC:1}
Let us first recall some basic facts and properties of bi-continuous semigroups. The theory of the latter was inititated by K\"uhnemund \cite{Ku}. Her basic assumptions are actually strongly related to Saks spaces \cite{C1987}.

\begin{assumptions}\label{asp:bicontspace} 
Consider a triple $(X,\|\cdot\|,\tau)$ where $X$ is a Banach space, and
\begin{num}
\item $\tau$ is a locally convex Hausdorff topology coarser than the norm-topology on $X$, i.e., the identity map $(X,\|\cdot\|)\to(X,\tau)$ is continuous;
\item $\tau$ is sequentially complete on the $\left\|\cdot\right\|$-closed unit ball, i.e., every $\left\|\cdot\right\|$-bounded $\tau$-Cauchy sequence is $\tau$-convergent;
\item The dual space of $(X,\tau)$ is norming for $X$, i.e.,
\begin{equation}\label{eq:norm}
\|x\|=\sup_{\substack{\varphi\in(X,\tau)'\\\|\varphi\|\leq1}}{|\varphi(x)|},\quad x\in X.\end{equation}
\end{num}
We call the triple $(X,\|\cdot\|,\tau)$ also a \emph{bi-admissible space}.
\end{assumptions}

\begin{remark}Notice that a locally convex Hausdorff topology $\tau$ can be described by a fundamental system of seminorms $\Gamma$ in the sense of \cite[Chapter 22]{MV1992}. The set $\operatorname{cs}(X,\tau)$ of all continuous seminorms is always such a fundamental system.
\end{remark}

Now we introduce bi-continuous semigroups as it was done by K\"uhnemund.

\begin{definition}\label{def:bicontsemi}
Let $(X,\|\cdot\|,\tau)$ be a bi-admissible space, i.e., the conditions of Assumption \ref{asp:bicontspace} are satisfied. We call a family of bounded linear operators $(T(t))_{t\geq0}$ on $X$ a \emph{bi-continuous semigroup} if
\begin{num}
\item $ T(t+s)=T(t)T(s)$ and $T(0)=I_X$ for all $s,t\geq 0$,
\item $(T(t))_{t\geq0}$ is strongly $\tau$-continuous, i.e. the map $\varphi_x:[0,\infty)\to(X,\tau)$ defined by $\varphi_x(t)=T(t)x$ is continuous for every $x\in X$,
\item $(T(t))_{t\geq0}$ has type $(M,\omega)$ for some $M\geq 1$ and $\omega\in \RR$, i.e., $\left\|T(t)\right\|\leq M\ee^{\omega t}$ for all $t\geq0$,
\item $(T(t))_{t\geq0}$ is locally-bi-equicontinuous, i.e., if $(x_n)_{n\in\NN}$ is a norm-bounded sequence in $X$ which is $\tau$-convergent to $0$, then also $(T(s)x_n)_{n\in\NN}$ is $\tau$-convergent to $0$ uniformly for $s\in[0,t_0]$ for each fixed $t_0\geq0$.
\end{num}
\end{definition}

Similarly to the case of $\Cnull$-semigroups, one defines the generator of a bi-continuous semigroup as follows.

\begin{definition}\label{def:BiGen}
Let $(T(t))_{t\geq0}$ be a bi-continuous semigroup on a bi-admissible space $(X,\|\cdot\|,\tau)$. The \emph{(infinitesimal) generator} of $(T(t))_{t\geq0}$ is the linear operator $(A,\dom(A))$ defined by
\[Ax:=\tau\!\shortminus\!\lim_{t\to0}{\frac{T(t)x-x}{t}}\] with domain 
\[\dom(A):=\Bigl\{x\in X:\ \tau\!\shortminus\!\lim_{t\to0}{\frac{T(t)x-x}{t}}\ \text{exists and} \ \sup_{t\in(0,1]}{\frac{\|T(t)x-x\|}{t}}<\infty\Bigr\}.\]
\end{definition}

In what follows, for a linear operator $(A,\dom(A))$, we will denote by $\rho(A)$ the resolvent set defined by
\begin{align}\label{eqn:ResolventSet}
\rho(A):=\left\{\lambda\in\CC:\ \lambda-A\ \text{is bijective}\right\}.
\end{align}
Moreover, for $\lambda\in\rho(A)$ we denote $R(\lambda,A):=(\lambda-A)^{-1}$, cf. \cite[Chapter IV, Def.~1.1]{EN}. In contrast to \cite{AJ2016}, we do not need to require that $R(\lambda,A)$ is bounded, since this follows directly by the closed graph theorem. As a matter of fact, the generator of a bi-continuous semigroup behaves almost like a $\Cnull$-semigroup generator. Let us recall some important properties of bi-continuous semigroups and their generators, cf. \cite[Sect.~1.2]{Ku}. 

\begin{theorem}\label{thm:BiContProp}
Let $(T(t))_{t\geq0}$ be a bi-continuous semigroup with generator $(A,\dom(A))$. Then the following hold:
\begin{abc}
\item The operator $A$ is bi-closed, i.e., whenever $x_n\stackrel{\tau}{\to}x$ and $Ax_n\stackrel{\tau}{\to}y$ and both sequences are norm-bounded, then $x\in\dom(A)$ and $Ax=y$.
\item The domain $\dom(A)$ is bi-dense in $X$, i.e., for each $x\in X$ there exists a norm-bounded sequence $(x_n)_{n\in\NN}$ in $\dom(A)$ such that $x_n\stackrel{\tau}{\to}x$.
\item For $x\in\dom(A)$ one has $T(t)x\in\dom(A)$ and $T(t)Ax=AT(t)x$ for all $t\geq0$.
\item For $t>0$ and $x\in X$ one has \begin{align}\int_0^t{T(s)x\ \dd s}\in\dom(A)\ \ \text{and}\ \ A\int_0^t{T(s)x\ \dd s}=T(t)x-x. \end{align}
\item For $\lambda>\omega$ one has $\lambda\in\rho(A)$ (thus $A$ is closed) and 
\begin{align}\label{eq:bicontlaplace}
R(\lambda,A)x=\int_0^{\infty}{\ee^{-\lambda s}T(s)x\ \dd s}{,\quad x\in X},
\end{align} where the integral is a $\tau$-improper Riemann integral, i.e., the limit
\[
\tau\!\shortminus\!\lim_{\alpha\to\infty}{\int_0^{\alpha}{\ee^{-\lambda s}T(s)x\ \dd s}},
\]
exists and the latter integral has to be understood as a $\tau$-Riemann integral.
\end{abc}
\end{theorem}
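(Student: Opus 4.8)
The plan is to establish the five assertions in the order (d), (c), (a), (b), (e), since the $\tau$-Riemann integral computation underlying (d) is the backbone for everything else. Throughout, the two structural ingredients I will lean on are local bi-equicontinuity (Definition~\ref{def:bicontsemi}(iv)) together with the type estimate $\|T(t)\|\leq M\ee^{\omega t}$, which let me commute the operators $T(t)$ past $\tau$-limits and $\tau$-integrals whenever the objects involved remain norm-bounded, and the sequential $\tau$-completeness on norm-bounded balls (Assumption~\ref{asp:bicontspace}(2)), which guarantees that the $\tau$-(improper) Riemann integrals exist. First I would record that for a $\tau$-continuous norm-bounded orbit the integral $\int_0^t T(s)x\,\dd s$ exists as a $\tau$-Riemann integral and satisfies $\|\int_0^t T(s)x\,\dd s\|\leq\int_0^t M\ee^{\omega s}\,\dd s\,\|x\|$, and that the fundamental theorem of calculus holds in the form $\tau\!\shortminus\!\lim_{h\to0}\frac1h\int_a^{a+h}T(s)x\,\dd s=T(a)x$.

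For (d), I apply $\frac1h(T(h)-\Id)$ to $\int_0^t T(s)x\,\dd s$ and use the semigroup law to rewrite it as $\frac1h(\int_t^{t+h}T(s)x\,\dd s-\int_0^h T(s)x\,\dd s)$; letting $h\to0$ and invoking the fundamental theorem of calculus gives the $\tau$-limit $T(t)x-x$, while the type estimate bounds the difference quotient in norm, placing the integral in $\dom(A)$. For (c), with $x\in\dom(A)$ I write $\frac1h(T(h)-\Id)T(t)x=T(t)\frac1h(T(h)-\Id)x$; since the difference quotients $\frac1h(T(h)-\Id)x$ are $\tau$-convergent to $Ax$ and norm-bounded (the latter exactly by the defining condition of $\dom(A)$), local bi-equicontinuity lets me pass $T(t)$ inside the $\tau$-limit to obtain $T(t)Ax$, and the norm bound $\frac{\|T(t)(T(h)x-x)\|}{h}\leq\|T(t)\|\sup_h\frac{\|T(h)x-x\|}{h}$ shows $T(t)x\in\dom(A)$ with $AT(t)x=T(t)Ax$.

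Part (a) then follows by combining (c) with the fundamental theorem of calculus into the integrated identity $T(t)x_n-x_n=\int_0^t T(s)Ax_n\,\dd s$, valid on $\dom(A)$: if $x_n\stackrel{\tau}{\to}x$ and $Ax_n\stackrel{\tau}{\to}y$ with both sequences norm-bounded, local bi-equicontinuity gives $\int_0^t T(s)Ax_n\,\dd s\stackrel{\tau}{\to}\int_0^t T(s)y\,\dd s$ while the left side tends to $T(t)x-x$, so that $T(t)x-x=\int_0^t T(s)y\,\dd s$; dividing by $t$, using the norm estimate on the integral, and letting $t\to0$ yields $x\in\dom(A)$ and $Ax=y$. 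Part (b) is obtained by taking $x_n:=n\int_0^{1/n}T(s)x\,\dd s$, which lies in $\dom(A)$ by (d), is norm-bounded by $M\ee^{\omega}\|x\|$ uniformly in $n$, and $\tau$-converges to $x$ by the fundamental theorem of calculus at $0$.

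Finally, for (e) and $\lambda>\omega$, I first show the partial integrals $\int_0^\alpha\ee^{-\lambda s}T(s)x\,\dd s$ are norm-bounded by $M\|x\|/(\lambda-\omega)$ and, as a function of $\alpha$, $\tau$-Cauchy; evaluating along sequences $\alpha_n\to\infty$, the sequential $\tau$-completeness on bounded balls yields the improper integral $R(\lambda)x$, which defines a bounded operator. The resolvent identities are verified by the same difference-quotient computation as in (d): applying $\frac1h(T(h)-\Id)$ to $R(\lambda)x$ and shifting variables produces the $\tau$-limit $\lambda R(\lambda)x-x$, showing $R(\lambda)x\in\dom(A)$ and $(\lambda-A)R(\lambda)=\Id$, while $R(\lambda)(\lambda-A)=\Id$ on $\dom(A)$ follows from the commutation $R(\lambda)Ax=AR(\lambda)x$, itself a consequence of (c) together with the bi-closedness (a) used to move $A$ out of the integral. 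I expect the recurring main obstacle to be precisely these interchanges of $T(t)$, $A$, and the $\tau$-integral sign: unlike the $\Cnull$-case, continuity of $T(t)$ in $\tau$ is only available on norm-bounded sets, so each passage to the limit must be accompanied by an explicit norm bound — supplied either by the type estimate or by the difference-quotient condition built into the definition of $\dom(A)$ — before local bi-equicontinuity can legitimately be applied.
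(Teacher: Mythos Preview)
The paper does not actually prove Theorem~\ref{thm:BiContProp}; it is stated with a reference to K\"uhnemund \cite[Sect.~1.2]{Ku} and no proof is given in the present article. So there is no ``paper's own proof'' to compare against.

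That said, your argument is correct and is exactly the standard route one finds in the literature: adapt the classical $\Cnull$-proof (as in \cite[Chapter~II, Lemma~1.3 and Thm.~1.10]{EN}) by systematically checking at each limit passage that the relevant sequences or nets stay norm-bounded, so that local bi-equicontinuity and the sequential $\tau$-completeness on balls can be invoked. Your ordering (d)$\to$(c)$\to$(a)$\to$(b)$\to$(e) is sensible, and your explicit remark at the end --- that every interchange of $T(t)$, $A$, and $\tau$-integral must be accompanied by a norm bound before bi-equicontinuity applies --- is precisely the point that distinguishes the bi-continuous proof from the $\Cnull$-one. One small place to be careful: in (a) you use the identity $T(t)x_n-x_n=\int_0^t T(s)Ax_n\,\dd s$, which is not literally (d) but rather its companion for $x\in\dom(A)$; it follows from (c) via $\frac{\dd}{\dd s}T(s)x=T(s)Ax$ in the $\tau$-sense and then integrating, and you should state this step explicitly. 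Apart from that, the sketch would expand into a complete proof along the lines of \cite{Ku}.
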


As in the case of strongly continuous semigroups on Banach spaces, there exists a Hille--Yosida generation type theorem, cf. \cite[Thm.~16]{Ku} or \cite[Thm.~5.6]{BF}. Here we will only formulate this theorem for bi-continuous contraction semigroups, i.e., $\left\|T(t)\right\|\leq1$ for all $t\geq0$, since these are the main objects in the context of the Lumer--Phillips theorem.

\begin{theorem}\label{thm:BiContHYCont}
Let $(X,\|\cdot\|,\tau)$ be a bi-admissible space, and let $(A,\dom(A))$ be a linear operator on the Banach space $X$. The following are equivalent:
\begin{abc}
\item The operator $(A,\dom(A))$ is the generator of a bi-continuous contraction semigroup $(T(t))_{t\geq0}$.
\item The operator $(A,\dom(A))$ satisfies $(0,\infty)\subseteq\rho(A)$ and
\begin{align}\label{eqn:HYOp}
\|R(\lambda,A)^{n}\|\leq\frac{1}{\lambda^n}
\end{align}
for all $n\in\NN$ and for all $\lambda>0$. Moreover, $A$ is bi-densely defined and the family
\begin{align}\label{eq:resbiequi}
\bigl\{\lambda^nR(\lambda,A)^n:\ n\in\NN,\ \lambda>0\bigr\}
\end{align}
 is bi-equicontinuous, meaning that for each norm bounded $\tau$-null sequence $(x_m)_{m\in\NN}$ one has $\lambda^nR(\lambda,A)^nx_m\stackrel{\tau}{\rightarrow} 0$ in $\tau$ uniformly for $n\in\NN$ and $\lambda>0$ as $m\to\infty$.
\end{abc}
In this case, we have the Euler formula
\begin{align}\label{eqn:Euler}
T(t)x:=\tau\!\shortminus\!\lim_{m\to\infty}\left(\frac{m}{t}R\left(\frac{m}{t},A\right)\right)^mx,\quad x\in X.
\end{align}
\end{theorem}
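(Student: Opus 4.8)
The plan is to prove the two implications of this Hille--Yosida theorem separately. In both directions the norm estimates are exactly those of the classical $\Cnull$-theory, so the genuinely new work consists in reconciling the local-bi-equicontinuity of the semigroup (Definition~\ref{def:bicontsemi}(4)) with the bi-equicontinuity of the resolvent family~\eqref{eq:resbiequi}; that is, in controlling the weak topology $\tau$.

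For (a)$\Rightarrow$(b) I would start from Theorem~\ref{thm:BiContProp}(e): for a contraction semigroup ($M=1$, $\omega=0$) it gives $(0,\infty)\subseteq\rho(A)$ together with the Laplace representation $R(\lambda,A)x=\int_0^\infty\ee^{-\lambda s}T(s)x\,\dd s$. Differentiating $(n-1)$ times in $\lambda$ under the integral (equivalently, iterating the resolvent) yields
\[
R(\lambda,A)^nx=\frac{1}{(n-1)!}\int_0^\infty s^{n-1}\ee^{-\lambda s}T(s)x\,\dd s,\qquad x\in X,\ \lambda>0,\ n\in\NN.
\]
Since $\int_0^\infty\frac{\lambda^ns^{n-1}}{(n-1)!}\ee^{-\lambda s}\,\dd s=1$ and $\|T(s)\|\leq1$, taking norms gives $\|R(\lambda,A)^n\|\leq\lambda^{-n}$, i.e.~\eqref{eqn:HYOp}; bi-density of $\dom(A)$ is Theorem~\ref{thm:BiContProp}(b). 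To obtain the bi-equicontinuity of~\eqref{eq:resbiequi} I would read the displayed formula as the average of the orbit $s\mapsto T(s)x$ against the Gamma density $\frac{\lambda^ns^{n-1}}{(n-1)!}\ee^{-\lambda s}$, apply a $\tau$-continuous seminorm $p$ and pull it inside the integral; local-bi-equicontinuity makes the integrand uniformly small over any compact $[0,t_0]$, and the remaining tail $[t_0,\infty)$ has to be estimated using $p\leq C_p\|\cdot\|$ and $\|T(s)x_m\|\leq\|x_m\|$.

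For (b)$\Rightarrow$(a) the natural device is the Yosida approximation. I would set $A_n:=nAR(n,A)=n^2R(n,A)-nI_X\in L(X)$ and $T_n(t):=\ee^{tA_n}=\ee^{-nt}\sum_{k\geq0}\frac{(tn^2)^k}{k!}R(n,A)^k$; inserting~\eqref{eqn:HYOp} shows $\|T_n(t)\|\leq1$, so each $T_n$ is a uniformly continuous contraction semigroup. On $\dom(A)$ one has $nR(n,A)x\to x$, hence $A_nx\to Ax$ in norm, and the standard estimate $\|T_n(t)x-T_m(t)x\|\leq t\,\|A_nx-A_mx\|$ makes $T_n(t)x$ norm-Cauchy, uniformly for $t$ in compact intervals. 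Passing to the $\tau$-limit $T(t)x$, first on $\dom(A)$ and then---via bi-density and the bi-equicontinuity of~\eqref{eq:resbiequi}---on all of $X$, I would then verify the semigroup law, the contraction bound, strong $\tau$-continuity and local-bi-equicontinuity in the limit, and identify the generator as $A$; the Euler formula~\eqref{eqn:Euler} falls out of the same scheme, since $(\tfrac{m}{t}R(\tfrac{m}{t},A))^m=(I_X-\tfrac{t}{m}A)^{-m}$ is the companion approximation to $T_n$.

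The step I expect to be the main obstacle is the topology $\tau$ in both directions. In (b)$\Rightarrow$(a) norm-convergence of $T_n(t)x$ is only available on $\dom(A)$, whereas the definition of a bi-continuous semigroup requires a $\tau$-limit for \emph{every} $x\in X$ together with local-bi-equicontinuity of the limiting family; since the $T_n(t)$ are manufactured from the resolvent, both properties have to be drawn from the bi-equicontinuity of~\eqref{eq:resbiequi}. Dually, in (a)$\Rightarrow$(b) the delicate point is exactly the uniform-in-$(n,\lambda)$ control of the Gamma tail above: a crude split is not uniform once the mass of the density escapes to large $s$, and this is precisely the place where the argument must genuinely exploit the bi-continuous structure rather than transcribe the Banach-space computation.
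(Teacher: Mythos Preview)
The paper does not prove Theorem~\ref{thm:BiContHYCont} at all: it is quoted from the literature, with the sentence ``there exists a Hille--Yosida generation type theorem, cf.\ \cite[Thm.~16]{Ku} or \cite[Thm.~5.6]{BF}'' preceding the statement, and no proof is supplied. So there is nothing in the paper to compare your attempt against; the authors treat this result as background and build their Lumer--Phillips theorem (Theorem~\ref{thm:LP1}) on top of it.

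As a stand-alone sketch your plan is the right shape---Laplace representation plus the Gamma($n,\lambda$) kernel for (a)$\Rightarrow$(b), Yosida approximants for (b)$\Rightarrow$(a)---and you correctly locate the genuine difficulties in the $\tau$-estimates. The one place where your outline stops short of an argument is the uniform-in-$(n,\lambda)$ control in (a)$\Rightarrow$(b): your proposed split into $[0,t_0]$ and $[t_0,\infty)$ with the tail handled by $p\leq C_p\|\cdot\|$ cannot give uniformity, since the Gamma($n,\lambda$) density has mean $n/\lambda$ and therefore places essentially all its mass beyond any fixed $t_0$ once $n/\lambda$ is large. You flag this yourself, but you do not indicate how to close the gap; in K\"uhnemund's original proof this step uses the local-bi-equicontinuity together with the exponential boundedness in a more careful way (rescaling and exploiting that on any norm-bounded set the $\tau$-topology interacts well with the semigroup action), and it is worth looking at \cite[Thm.~16]{Ku} to see the actual mechanism rather than leaving it as an acknowledged obstacle.
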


\section{A Lumer--Phillips generation type theorem}
The original Lumer--Phillips generation theorem for strongly continuous semigroups on Banach spaces goes back to famous work of Lumer and Phillips from around 1959--61. Below we follow the monograph \cite[Chapter II, Def.~3.13]{EN} and recall the following central definition.

\begin{definition}\label{def:Dissip}
A linear operator $(A,\dom(A))$ on a Banach space $X$ is called \emph{dissipative} if
\[
\left\|(\lambda-A)x\right\|\geq\lambda\left\|x\right\|,
\]
for all $\lambda>0$ and $x\in\dom(A)$.
\end{definition}

In our setting we need to control not only the norm but also the locally convex topology in a way similar to Definition \ref{def:Dissip}. The definition is motivated by the work of Albanese and Jornet \cite{AJ2016} and will also be crucial for the generation theorem in the bi-continuous setting. 

\begin{definition}\label{def:taudiss}
Let $(A,\dom(A))$ be an operator on a bi-admissible space $(X,\|\cdot\|,\tau)$. We call this operator \emph{bi-dissipative} if there exists a fundamental system of seminorms $\Gamma$ generating the topology $\tau$ such that 
\begin{align}\label{eqn:taudiss}
p((\lambda-A)x)\geq \lambda p(x),\quad \lambda>0,\;p\in\Gamma,\;x\in\dom(A)
\end{align}
and 
\begin{align}\label{eqn:taudiss-2}
\left\|x\right\|=\sup_{p\in\Gamma}{p(x)},\quad x\in X.
\end{align}
\end{definition}

\begin{remark}\label{rem:taudiss}
Let us summarize some observations regarding the bi-dissipative operators introduced in Definition \ref{def:taudiss}.
\begin{iiv}
	\item Since $(X,\|\cdot\|,\tau)$ is supposed to be a bi-admissible space, cf. Assumption \ref{asp:bicontspace}, the dual space of $(X,\tau)$ is norming for $X$. By \cite[Rem.~5.2(4)]{BF} this is the case if and only if there exists a fundamental system of seminorms $\Gamma$ such that
	\begin{align}\label{eqn:normingProp}
	\left\|x\right\|=\sup_{p\in\Gamma}{p(x)},\quad x\in X.
	\end{align}
	In view of \eqref{eqn:taudiss} one directly obtains that each bi-dissipative operator also satisfies 
	\[
	\left\|(\lambda-A)x\right\|\geq\lambda\left\|x\right\|,
	\]
	for all $x\in\dom(A)$, i.e., bi-dissipative operators on bi-admissible spaces are also dissipative in the sense of Definition \ref{def:Dissip}.
	\item Definition \ref{def:taudiss} is motivated by \cite[Def.~3.9]{AJ2016}, where Albanese and Jornet called an operator $(A,\dom(A))$ on a locally convex space $(X,\tau)$ \emph{$\Gamma$-dissipative} if \eqref{eqn:taudiss} holds. A bi-dissipative operator is thus $\Gamma$-disspative in the sense of \cite{AJ2016} if we pick the fundamental system of seminorms $\Gamma$ as in Definition \ref{def:taudiss}. Notice that $\Gamma$-dissipativity depends on the choice of the fundamental system, see \cite[Rem.~3.10]{AJ2016}.
\end{iiv}	
\end{remark}

Let us collect some basic properties of bi-dissipative operators, cf. \cite[Prop.~3.11]{AJ2016}, which will be useful for the following.

\begin{proposition}\label{prop:PropDissOp}
Let $(A,\dom(A))$ be a bi-dissipative operator with respect to a fundamental system of seminorms $\Gamma$ on a bi-admissible space $(X,\|\cdot\|,\tau)$. Then the following assertions are true:
\begin{iiv}
	\item $\lambda-A$ is injective for all $\lambda>0$. Moreover, one has that
	\begin{align}\label{eqn:taudissRes}
	p(R(\lambda,A)x)\leq\frac{1}{\lambda}p(x),
	\end{align}
	for all $\lambda>0$, $p\in\Gamma$ and $x\in\mathrm{Ran}(\lambda-A)$.
	\item $\lambda-A$ is surjective for some $\lambda>0$ if and only if it is surjective for all $\lambda>0$. In addition, one has that $\left(0,\infty\right)\subseteq\rho(A)$.
\end{iiv}
\end{proposition}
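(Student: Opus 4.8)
The plan is to follow the blueprint of the classical Lumer--Phillips argument, replacing the scalar estimates by the seminorm estimate \eqref{eqn:taudiss} where the topology $\tau$ is involved, and by the norm-dissipativity recorded in Remark \ref{rem:taudiss}(i) at the one place where completeness of $X$ is genuinely needed.

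For (i), injectivity is immediate: if $(\lambda-A)x=0$ for some $x\in\dom(A)$ and $\lambda>0$, then \eqref{eqn:taudiss} yields $0=p((\lambda-A)x)\geq\lambda p(x)\geq0$ for every $p\in\Gamma$, hence $p(x)=0$ for all $p\in\Gamma$; since $\Gamma$ generates the Hausdorff topology $\tau$ this forces $x=0$ (alternatively one invokes $\|(\lambda-A)x\|\geq\lambda\|x\|$ from Remark \ref{rem:taudiss}(i)). Consequently $R(\lambda,A)=(\lambda-A)^{-1}$ is well defined on $\mathrm{Ran}(\lambda-A)$, and for $x\in\mathrm{Ran}(\lambda-A)$ I would write $x=(\lambda-A)y$ with $y=R(\lambda,A)x\in\dom(A)$; then \eqref{eqn:taudiss} reads $p(x)=p((\lambda-A)y)\geq\lambda p(y)=\lambda p(R(\lambda,A)x)$, which is precisely \eqref{eqn:taudissRes}.

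For (ii), one implication is trivial, so suppose $\lambda_0-A$ is surjective for some $\lambda_0>0$. Together with the injectivity from (i) this makes $\lambda_0-A$ bijective, and Remark \ref{rem:taudiss}(i) gives $\|R(\lambda_0,A)\|\leq 1/\lambda_0$ on all of $X$; in particular $R(\lambda_0,A)\in L(X)$ with no a priori closedness assumption on $A$. I would then fix $\lambda>0$ with $|\lambda-\lambda_0|<\lambda_0$ and, for given $y\in X$, rewrite $(\lambda-A)x=y$ using the operator identity $(\lambda-A)R(\lambda_0,A)u=(I+(\lambda-\lambda_0)R(\lambda_0,A))u$ valid for all $u\in X$. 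Since $|\lambda-\lambda_0|\,\|R(\lambda_0,A)\|\leq|\lambda-\lambda_0|/\lambda_0<1$, the operator $I+(\lambda-\lambda_0)R(\lambda_0,A)$ is boundedly invertible by a Neumann series (equivalently, a Banach fixed point argument), and $x:=R(\lambda_0,A)(I+(\lambda-\lambda_0)R(\lambda_0,A))^{-1}y\in\dom(A)$ solves $(\lambda-A)x=y$. Hence $\lambda-A$ is surjective for every $\lambda\in(0,2\lambda_0)$.

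Finally I would bootstrap: the set $S:=\{\lambda>0:\lambda-A\text{ is surjective}\}$ satisfies $(0,2\lambda_0)\subseteq S$ whenever $\lambda_0\in S$ (again using that injectivity from (i) upgrades surjectivity to bijectivity at every such point), so starting from the given $\lambda_0$ and iterating along $\lambda_{n+1}=\tfrac{3}{2}\lambda_n\to\infty$ shows $S=(0,\infty)$. Combining surjectivity with injectivity and the norm bound $\|R(\lambda,A)\|\leq1/\lambda$ from (i) and Remark \ref{rem:taudiss}(i), each $\lambda>0$ gives a bijective $\lambda-A$ with bounded inverse, that is $(0,\infty)\subseteq\rho(A)$. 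The only delicate point is the passage from surjectivity at the single value $\lambda_0$ to an interval of values: it is essential to run the perturbation step in the \emph{norm}, exploiting completeness of $X$ and the norm estimate of Remark \ref{rem:taudiss}(i), since the seminorms $p\in\Gamma$ are in general neither complete nor able to drive the Neumann series; the seminorm estimate \eqref{eqn:taudissRes} is only recovered afterwards, once $\lambda\in\rho(A)$ has been established.
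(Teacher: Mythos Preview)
Your argument is correct. The paper does not give a self-contained proof of this proposition but refers to \cite[Prop.~3.11]{AJ2016} for the locally convex version and, in Remark~\ref{rem:DissNormResol}, points out that part (ii) also follows from \cite[Chapter II, Prop.~3.14(ii)]{EN} via the norm-dissipativity of Remark~\ref{rem:taudiss}(i); your perturbation-and-bootstrap argument in the norm is exactly this latter route, carried out in detail, and your observation that the Neumann series step must be run in the Banach norm (not in the seminorms) is precisely the point.
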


\begin{remark}\label{rem:DissNormResol}
Making use of Remark \ref{rem:taudiss}(i) we also get that $\left\|R(\lambda,A)x\right\|\leq\frac{1}{\lambda}\left\|x\right\|$ for all $\lambda>0$ and $x\in\mathrm{Ran}(\lambda-A)$. Hence, the second assertion of Proposition \ref{prop:PropDissOp} is also a direct consequence of \cite[Chapter II, Prop.~3.14(ii)]{EN}.
\end{remark}

\medskip
Let us now state our first result.

\begin{theorem}\label{thm:LP1}
Let $(A,\dom(A))$ be a bi-dissipative, bi-densely defined and closed operator on a bi-admissible space $(X,\|\cdot\|,\tau)$. Then the following statements are equivalent:
\begin{abc}
	\item $(A,\dom(A))$ generates a bi-continuous contraction semigroup on $X$.
	\item $\lambda-A$ is surjective for some $\lambda>0$.
\end{abc}
\end{theorem}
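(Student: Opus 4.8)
The plan is to deduce the theorem from the Hille--Yosida type characterisation in Theorem~\ref{thm:BiContHYCont}, using the preparatory Proposition~\ref{prop:PropDissOp} to handle the range condition and the seminorm estimates. The implication (a)$\Rightarrow$(b) is immediate: a bi-continuous contraction semigroup has type $(1,0)$, so $\omega=0$, and Theorem~\ref{thm:BiContProp}(e) yields $(0,\infty)\subseteq\rho(A)$; in particular $\lambda-A$ is bijective, hence surjective, for every $\lambda>0$. All the work is therefore in (b)$\Rightarrow$(a).

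So assume $\lambda_0-A$ is surjective for some $\lambda_0>0$. First I would invoke Proposition~\ref{prop:PropDissOp}(ii) to upgrade this to $(0,\infty)\subseteq\rho(A)$, so that $R(\lambda,A)$ is defined on all of $X$ for every $\lambda>0$ and the resolvent estimate \eqref{eqn:taudissRes} now holds for all $x\in X$. Iterating this estimate along the fundamental system $\Gamma$ for which $A$ is bi-dissipative gives, by a straightforward induction, $p(R(\lambda,A)^nx)\leq\lambda^{-n}p(x)$ for every $p\in\Gamma$, every $\lambda>0$, every $n\in\NN$ and every $x\in X$; the inductive step is legitimate precisely because the range is now the whole space.

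From this single family of estimates both remaining hypotheses of Theorem~\ref{thm:BiContHYCont}(b) fall out. Taking the supremum over $p\in\Gamma$ and using the norming identity \eqref{eqn:taudiss-2} converts the seminorm bound into the power estimate $\|R(\lambda,A)^n\|\leq\lambda^{-n}$, which is \eqref{eqn:HYOp}. For bi-equicontinuity, rewrite the estimate as $p(\lambda^nR(\lambda,A)^nx)\leq p(x)$, valid uniformly in $n$ and $\lambda$; hence for any norm-bounded $\tau$-null sequence $(x_m)_{m\in\NN}$ and any $p\in\Gamma$ one has $p(\lambda^nR(\lambda,A)^nx_m)\leq p(x_m)\to0$, and since the right-hand side is independent of $n$ and $\lambda$ this convergence is uniform in $n\in\NN$ and $\lambda>0$. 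As $A$ is bi-densely defined by hypothesis, Theorem~\ref{thm:BiContHYCont} then shows that $A$ generates a bi-continuous contraction semigroup.

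I do not expect a serious obstacle in the mechanics, since the decisive inputs --- the passage from surjectivity for one $\lambda$ to $(0,\infty)\subseteq\rho(A)$, and the seminorm resolvent estimate --- are already packaged in Proposition~\ref{prop:PropDissOp}. The one point that must be watched is that the estimate \eqref{eqn:taudiss} holds only for the particular fundamental system $\Gamma$ witnessing bi-dissipativity, so every step (the iteration, the supremum, and the equicontinuity argument) has to be carried out with respect to that same $\Gamma$. In the scalar Lumer--Phillips theorem bi-equicontinuity has no analogue, and it is exactly the uniformity of the seminorm bound in $n$ and $\lambda$ that makes this genuinely new condition hold automatically here.
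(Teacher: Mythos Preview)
Your proof is correct and follows essentially the same route as the paper: both directions are reduced to the Hille--Yosida characterisation Theorem~\ref{thm:BiContHYCont}, with Proposition~\ref{prop:PropDissOp} supplying $(0,\infty)\subseteq\rho(A)$ and the seminorm resolvent estimate, from which the norm bound \eqref{eqn:HYOp} and the bi-equicontinuity of $\{\lambda^nR(\lambda,A)^n\}$ are read off exactly as you indicate. The only cosmetic difference is that the paper invokes Remark~\ref{rem:DissNormResol} (i.e., ordinary dissipativity) for the norm estimate, whereas you obtain it directly by taking the supremum over $p\in\Gamma$ using \eqref{eqn:taudiss-2}; these are the same computation.
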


\begin{proof}
For both implications, we make use of the Hille--Yosida generation type theorem for bi-continuous contraction semigroups, cf. Theorem \ref{thm:BiContHYCont}.\\
(a)$\Rightarrow$(b): Assume, that $(A,\dom(A))$ generates a bi-continuous contraction semigroup on $X$. By Theorem \ref{thm:BiContHYCont} we conclude that $(0,\infty)\subseteq\rho(A)$. In particular, making use of \eqref{eqn:ResolventSet} we observe that $\lambda-A$ is surjective for all and hence for some $\lambda>0$.\\
(b)$\Rightarrow$(a): In order to show that $(A,\dom(A))$ generates a bi-continuous contraction semigroup, by Theorem \ref{thm:BiContHYCont} it suffices to show that $\left\|R(\lambda,A)^n\right\|\leq\frac{1}{\lambda^n}$ for all $n\in\NN$ and $\lambda>0$ and that the family of operators $\left\{\lambda^nR(\lambda,A)^n:\ n\in\NN, \lambda>0\right\}$ is bi-equicontinuous. By assumption, the operator $(A,\dom(A))$ is bi-dissipative. Hence, by an application of Proposition \ref{prop:PropDissOp} we conclude 
that the operator $\lambda-A$ is bijective for all $\lambda>0$. By the definition of the resolvent set \eqref{eqn:ResolventSet} this implies that $(0,\infty)\subseteq\rho(A)$. Moreover, we obtain that \eqref{eqn:taudissRes} holds which implies that $\left\{\lambda^nR(\lambda,A)^n:\ n\in\NN, \lambda>0\right\}$ is bi-equicontinuous since $p(\lambda^nR(\lambda,A)^nx)\leq p(x)$ for all $p\in\Gamma$, $\lambda>0$ and $x\in X$. By Remark \ref{rem:DissNormResol} we obtain that $(A,\dom(A))$ satisfies $\left\|R(\lambda,A)x\right\|\leq\frac{1}{\lambda}\left\|x\right\|$ for all $\lambda>0$ and $x\in\mathrm{Ran}(\lambda-A)=X$. As a consequence, we obtain that $\left\|R(\lambda,A)^n\right\|\leq\frac{1}{\lambda^n}$ for all $n\in\NN$ and $\lambda>0$. This proves the statement.
\end{proof}

Let us mention that for the implication (a)$\Rightarrow$(b) the bi-dissipativity is not needed. However, the assumption that $(A,\dom(A))$ generates a bi-continuous contraction semigroup does not imply in general that $(A,\dom(A))$ is bi-dissipative. To see this, assume that $(A,\dom(A))$ generates a bi-continuous contraction semigroup. By Theorem \ref{thm:BiContHYCont} one gets that \eqref{eqn:HYOp} holds which implies that $(A,\dom(A))$ is dissipative in the sense of Definition \ref{def:Dissip}. In addition, Theorem \ref{thm:BiContHYCont} shows that $\left\{\lambda^nR(\lambda,A)^n:\ n\in\NN, \lambda>0\right\}$ is bi-equicontinuous. Nevertheless, this does not imply that $(A,\dom(A))$ is bi-dissipative. We will illustrate this with an example. 

\begin{example}\label{exm:CounterEx}
Consider the space $X:=\BC\left(\left(\infty,0\right]\right)$ equipped with the supremum norm and the compact-open topology $\tau_\mathrm{co}$. Recall, that $\tau_\mathrm{co}$ is a locally convex topology generated by the seminorms $p_n$, $n\in\mathbb{N}$, where $p_n(f):=\sup_{x\in [-n,0]}{\left|f(x)\right|}$, $f\in X$. On the space $X$ we now consider the right-translation semigroup defined by $(T(t)f)(x)=f(x-t)$ for $t\geq0$, $f\in X$ and $x\in\left(-\infty,0\right]$. This semigroup becomes a bi-continuous contraction semigroup on $X$ with respect to the compact-open topology. Assume that its generator is bi-dissipative, i.e., there exists a fundamental system of seminorms $\Gamma$ such that $q(R(\lambda,A)f)\leq\frac{1}{\lambda}q(f)$ for all $\lambda>0$, $q\in\Gamma$, $f\in X$. Since the two fundamental systems of seminorms induce the same topology, we find firstly for $p_1$ a seminorm $q\in\Gamma$ and $C>0$ and then secondly a seminorm $p_n$ and $K>0$ such that $p_1(f)\leq Cq(f)\leq Kp_n(f)$ for $f\in X$. Now define a function $f\in\BC\left(\left(\infty,0\right]\right)$ as in the following picture.

\begin{center}
\begin{tikzpicture}[scale=1]

  \draw[-latex] (-3,0) -- (3,0) node[right] {\small$x$};
  \draw[-latex] (2,-0.5) -- (2,1.) node[above] {\small$f(x)$};
  \draw [ultra thick, draw=black] (-2.7,0.7) -- (-0.978,0.7);
    \draw [ultra thick, draw=black] (-1,0.71) -- (-0.495,0.03);
    \draw [ultra thick, draw=black] (-0.52,0.03) -- (2,0.03);
   \node[] at (-2.9,0.7) {\small$1$};
      \draw[] (-0.2,-0.1) -- (-0.2,0) node[below] {};
      \draw[] (-1.2,-0.1) -- (-1.2,0) node[below] {};
     \node[] at (-0.23,-0.3) {\small$\shortminus\,n$};
        \node[] at (-1.3,-0.28) {\small$\shortminus\,n\shortminus1$};
\end{tikzpicture}
\end{center}

\medskip

Then, $p_{n}(f)=0$ and therefore $\frac{1}{\lambda}q(f)=0$. For the resolvent we estimate the following
\begin{align*}
p_1(R(\lambda,A)f)&=p_1\left(\int_0^\infty{\ee^{-\lambda t}T(t)f\ \dd{t}}\right)\geq p_1\left(\int_{n+1}^\infty{\ee^{-\lambda t}T(t)f\ \dd{t}}\right)\\
&=\sup_{x\in [-1,0]}{\left|\int_{-\infty}^{x-n-1}{\ee^{-\lambda(x-s)}f(s)}\ \dd{s}\right|}\\
&=\sup_{x\in [-1,0]}\left|\int_{-\infty}^{x-n-1}{\ee^{-\lambda(x-s)}}\ \dd{s}\right|\\
&=\sup_{x\in [-1,0]}{\left|\frac{1}{\lambda}\ee^{-\lambda(n+1)}\right|}>0
\end{align*}
which implies that $q(R(\lambda,A)f)>0$.
\end{example}	

One of the prototype examples, where the classical Lumer-Phillips theorem can be applied, is the operator $Af=-f'$ on the space $\operatorname{C}[a,b]$, see \cite[Chapter II, Ex.~3.19]{EN}. Below we show that our bi-continuous Lumer--Phillips theorem applies to the same operator but considered on the space $\BC\left(\left[0,\infty\right)\right)$. Of course, in this case, it is well-known and can easily be checked directly, that $A$ generates the left shift semigroup. Indeed, the latter is one of the typical examples for bi-continuous semigroups which are not $\Cnull$. Example \ref{ex:TranslationSemi} illustrates however that with Theorem \ref{thm:LP1} one can conclude the generator property \emph{without knowing in advance that the semigroup will be the shift}.

\begin{example}\label{ex:TranslationSemi}
Let us consider the space $X:=\BC\left(\left[0,\infty\right)\right)$ equipped with the supremum norm as well as the compact-open topology $\tau_\mathrm{co}$, see Example \ref{exm:CounterEx}. We consider the operator $(A,\dom(A))$ defined by
\[
Af:=-f',\quad \dom(A):=\left\{f\in\BC^1\left(\left[0,\infty\right)\right):\ f(0)=0\right\}.
\] 
Let us show that the operator $(A,\dom(A))$ is bi-dissipative in the sense of Definition \ref{def:taudiss}. The resolvent of $(A,\dom(A))$ can be determined explicitly by
\[
(R(\lambda,A)f)(x)=\int_0^x{\ee^{\lambda(t-x)}f(t)\ \dd{t}}=\ee^{-\lambda x}\int_0^x{\ee^{\lambda t}f(t)\ \dd{t}},\quad x\in\left[0,\infty\right),\ f\in\BC\left(\left[0,\infty\right)\right),\ \lambda>0,
\]
by solving the differential equation $f'=\lambda f-g$. This shows that $\lambda-A$ is surjective, since indeed $R(\lambda,A)f$ defines a continuously differentiable function satisfying $f(0)=0$ which is bounded. To show that $(A,\dom(A))$ is bi-dissipative, in the sense of Definition \ref{def:taudiss}, we use the fundamental system of seminorms $(p_n)_{n\in\NN}$ with $p_n(f):=\sup_{x\in\left[0,n\right]}{\left|f(x)\right|}$. Let $n\in\NN$ be arbitrary and observe that for $x\in\left[0,n\right]$ one has
\[
\lambda\left|(R(\lambda,A)f)(x)\right|\leq\lambda\ee^{-\lambda x}\int_0^x{\ee^{\lambda s}\left|f(s)\right|\ \dd{s}}\leq\lambda\ee^{-\lambda x}\int_0^x{\ee^{\lambda s}p_n(f)\ \dd{s}}=(1-\ee^{-\lambda x})p_n(f)\leq p_n(f),
\]
showing that $(A,\dom(A))$ is a bi-dissipative operator.
\end{example}

After this first `toy example' let us consider now the heat equation.

\begin{example}\label{ex:heatsemi} Let $X:=\BC(\RR)$, again endowed with the supremum norm and the topology of uniform convergence on compact sets, and consider the one-dimensional Laplacian, i.e., $(A,\dom(A))$ defined by
\[
Af:=f'',\quad\dom(A):=\BC^2(\RR).\vspace{5pt}
\]
If we put $n=2$, $\lambda=1$ and pick $f\in\BC^2(\RR)$ such that $f(x)=x^2$ for $x\in[-2,2]$, then $p_n((\lambda-A)f)=2$ but $\frac1\lambda p_n(f) =4$, and we see that $A$ is not bi-dissipative with respect to the fundamental system of seminorms mentioned in Example \ref{ex:TranslationSemi}. Proposition \ref{prop:EquiContDissip} below will even show that $A$ cannot be bi-dissipative for any fundamental system: The operator $A$ indeed generates a bi-continuous semigroup which is however not locally equicontinuous, see \cite[Ex.~6]{Ku}, and can therefore not satisfy Proposition \ref{prop:EquiContDissip}(a).

\end{example}

\begin{remark}
In Example \ref{ex:heatsemi} no initial conditions are necessary since the domain of the functions is the whole real line $\RR$. However, when restricting to unbounded domains $\Omega\subseteq\RR^n$ one can add Dirichlet boundary conditions in order to obtain a generator of a bi-continuous semigroup on the space $\mathrm{C_{b,0}}(\overline{\Omega}):=\left\{f\in\BC(\overline{\Omega}):\ f_{\left|\partial\Omega\right.}=0\right\}$, cf. \cite[Sect.~2.4]{FaPhD}.
\end{remark}

We will give another example that treats differential equations on infinite networks in Section \ref{SEC:3}. Before we come to that we will however first establish a Lumer--Phillips variant that characterizes when the closure of an operator is a generator in the bi-continuous setting.

\begin{proposition}\label{prop:EquiContDissip}
Let $(A,\dom(A))$ be a bi-densely defined and closed operator on a bi-admissible space $(X,\|\cdot\|,\tau)$. Then the following statements are equivalent:
\begin{abc}
	\item $(A,\dom(A))$ generates a bi-continuous contraction semigroup $(T(t))_{t\geq0}$ and there exists a fundamental system of seminorms $\Gamma$ which is firstly norming, i.e., \eqref{eqn:normingProp} holds, and secondly such that $p(T(t)x)\leq p(x)$ holds for all $t\geq0$, $p\in\Gamma$ and $x\in X$.
	\item $(A,\dom(A))$ is bi-dissipative and $\lambda-A$ is surjective for some $\lambda>0$.
\end{abc}
\end{proposition}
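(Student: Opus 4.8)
The plan is to derive both implications from the Hille--Yosida type theorem (Theorem~\ref{thm:BiContHYCont}), the Laplace and Euler representations collected in Theorem~\ref{thm:BiContProp} and Theorem~\ref{thm:BiContHYCont}, together with Theorem~\ref{thm:LP1}. The guiding observation is that one and the same fundamental system of seminorms $\Gamma$ can serve both roles: the norming, seminorm-contractive system appearing in (a) and the system witnessing bi-dissipativity in (b). Throughout I will exploit that every $p\in\Gamma$ is $\tau$-continuous, so that seminorm bounds survive $\tau$-limits and can be pulled inside $\tau$-Riemann integrals.

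For (b)$\Rightarrow$(a), the generation statement is immediate from Theorem~\ref{thm:LP1}, since bi-dissipativity together with surjectivity of $\lambda-A$ for some $\lambda>0$ are exactly its hypotheses. It remains to produce the norming, seminorm-contractive system, and I would simply take $\Gamma$ to be the system witnessing bi-dissipativity; it is norming by \eqref{eqn:taudiss-2}. Proposition~\ref{prop:PropDissOp} yields $(0,\infty)\subseteq\rho(A)$, and since $\lambda-A$ is then surjective, the estimate \eqref{eqn:taudissRes} holds for all $x\in X$; iterating it gives $p\bigl(\lambda^nR(\lambda,A)^nx\bigr)\leq p(x)$ for every $n\in\NN$, $\lambda>0$, $p\in\Gamma$ and $x\in X$. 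Applying this with $\lambda=m/t$ and $n=m$ and passing to the $\tau$-limit in the Euler formula \eqref{eqn:Euler}, the $\tau$-continuity of $p$ forces $p(T(t)x)\leq p(x)$ for all $t\geq0$, as required.

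For (a)$\Rightarrow$(b), surjectivity of $\lambda-A$ for every $\lambda>0$ follows directly from Theorem~\ref{thm:BiContProp}(e), since a contraction semigroup has type $(1,0)$ and hence $(0,\infty)\subseteq\rho(A)$. To establish bi-dissipativity with respect to the given system $\Gamma$, which is norming by hypothesis so that \eqref{eqn:taudiss-2} holds, I would start from the Laplace representation \eqref{eq:bicontlaplace} and estimate, for fixed $\lambda>0$ and $p\in\Gamma$,
\[
p\bigl(\lambda R(\lambda,A)x\bigr)\leq\lambda\int_0^\infty\ee^{-\lambda s}\,p(T(s)x)\,\dd s\leq\lambda\int_0^\infty\ee^{-\lambda s}\,p(x)\,\dd s=p(x),
\]
using the hypothesis $p(T(s)x)\leq p(x)$. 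Substituting $x=R(\lambda,A)y$ for $y=(\lambda-A)x$ with $x\in\dom(A)$ then yields $\lambda p(x)\leq p((\lambda-A)x)$, which is precisely \eqref{eqn:taudiss}.

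The main technical point, and the only step that is not a direct citation, is the first inequality in the last display: pulling the seminorm inside the $\tau$-improper Riemann integral. I would justify it by approximating the finite integrals $\int_0^\alpha\ee^{-\lambda s}T(s)x\,\dd s$ by $\tau$-Riemann sums, applying the triangle inequality for $p$ together with the pointwise bound $p(T(s)x)\leq p(x)$, and then invoking the $\tau$-continuity of $p$ to pass first to the integral for fixed $\alpha$ and then to the improper limit $\alpha\to\infty$. This is the same mechanism that preserves the seminorm bound under the $\tau$-limit in the Euler formula in the converse direction, so both obstacles are of the same, essentially routine, nature.
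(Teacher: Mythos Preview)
Your proof is correct and follows essentially the same route as the paper: the Laplace representation plus $p(T(t)x)\leq p(x)$ to obtain the resolvent estimate and hence bi-dissipativity for (a)$\Rightarrow$(b), and the Euler formula together with the iterated resolvent bound \eqref{eqn:taudissRes} for (b)$\Rightarrow$(a). The paper's write-up is terser---it invokes Theorem~\ref{thm:LP1} to reduce (a)$\Rightarrow$(b) to showing bi-dissipativity and compresses (b)$\Rightarrow$(a) into a one-line reference to \eqref{eqn:Euler} and \eqref{eqn:taudissRes}---but you have supplied exactly the details that underlie those references.
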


\begin{proof}
In order to show the implication (a)$\Rightarrow$(b) we make use of Theorem \ref{thm:LP1}. It suffices to show that $(A,\dom(A))$ is bi-dissipative. To do so, we observe that for $\lambda>0$, $p\in\Gamma$ and $x\in X$ one has that
\[
p(R(\lambda,A)x)=p\left(\int_0^\infty{\ee^{-\lambda t}T(t)\ \dd{t}}\right)\leq\int_0^{\infty}{\ee^{-\lambda t}p(T(t)x)\ \dd{t}}\leq\int_0^{\infty}{\ee^{-\lambda t}p(x)\ \dd{t}}=\frac{1}{\lambda}p(x),
\]
showing that $(A,\dom(A))$ is bi-dissipative (with respect to the fundamental system of seminorms $\Gamma$). Notice, that we use the fact, that the resolvent $R(\lambda,A)$ can be expressed by means of the Laplace transform, cf. Theorem \ref{thm:BiContProp}(e), and that the first inequality follows from the continuity of the seminorms. For the converse implication (b)$\Rightarrow$(a) we use the Euler formula \eqref{eqn:Euler} as well as the resolvent estimate \eqref{eqn:taudissRes}.
\end{proof}

The following remark shows that the asumptions of Proposition \ref{prop:EquiContDissip}(a) hold true up to "rescaling", for every bi-continuous semigroup which is equicontinuous with respect to the locally convex topology.


\begin{remark}
Let $(T(t))_{t\geq0}$ be an equicontinuous and bi-continuous semigroup. Following \cite[Thm.~1.2.7(e) \& Sect.~1.3(b)]{FaPhD} we may assume that the semigroup is bounded by rescaling, i.e., one considers the operator semigroup $(\ee^{-\omega_0}T(t))_{t\geq0}$ where $\omega_0$ denotes the growth bound of the original operator semigroup $(T(t))_{t\geq0}$, cf. \cite[Chapter 1, Def.~5.6]{EN}. Then there exists a norming fundamental system of seminorms $\Gamma$, i.e., the fundamental system of seminorms $\Gamma$ satisfies \eqref{eqn:normingProp}. Using the equicontinuity (which is independent of the chosen fundamental system) we can firstly proceed as in \cite[Rem.~2.2]{ABR2013} and define a new fundamental system of seminorms $\widehat{\Gamma}:=\left\{\widehat{p}:\ p\in\Gamma\right\}$, where
\[
\widehat{p}(x):=\sup_{t\geq0}p(T(t)x),\quad x\in X,\ p\in\Gamma,
\]
which induces the same topology as $\Gamma$ and satisfies $\widehat{p}(T(t)x)\leq\widehat{p}(x)$ for all $p\in\widehat{\Gamma}$, $x\in X$ and $t\geq0$. Secondly, we can follow \cite[Sect.~1.3(b)]{FaPhD} and put
\[
\opnorm{x}:=\sup_{t\geq0}\left\|T(t)x\right\|,\quad x\in X.
\]
Then $\|\cdot\|$ and $\opnorm{\cdot}$ are equivalent and $\widehat{\Gamma}$ is norming for $\opnorm{\cdot}$, since
\[
\sup_{\widehat{p}\in\widehat{\Gamma}}{\widehat{p}(x)}=\sup_{p\in\Gamma}\,\sup_{t\geq0}p(T(t)x)=\sup_{t\geq0}\,\sup_{p\in\Gamma}{p(T(t)x)}=\sup_{t\geq0}{\left\|T(t)x\right\|}=\opnorm{x}
\]
holds.
\end{remark}

We want to go a step further by weakening the assumptions in Theorem \ref{thm:LP1}. To do so, we recall the notion of closable operators. Here we have to distinguish between closable operators on Banach spaces and closable operators on locally convex spaces.

\begin{definition}
Let $(A,\dom(A))$ be a linear operator on a Banach space $X$. We call the operator \emph{closable} if for every sequence $(x_n)_{n\in\NN}$ in $\dom(A)$ with $x_n\to0$ and $Ax_n\to y$ one has $y=0$.
\end{definition}

\begin{definition}
Let $(A,\dom(A))$ be a linear operator on a locally convex space $(X,\tau)$. We call the operator \emph{$\tau$-closable} if for every net $(x_\alpha)_{\alpha\in A}$ in $\dom(A)$ with $x_\alpha\to0$ and $Ax_\alpha\to y$ one has $y=0$.
\end{definition} 

The difficulty here is to combine these two notions for the case of bi-continuous semigroups. This can be done by means of the so-called mixed topology which was introduced by Wiweger \cite{W1961} for general topological spaces. We will here however restrict ourselves to the simpler situation of bi-admissible spaces. In this case according to \cite[Thm.~3.1.1]{W1961} and \cite[Sect.~A.1]{FaPhD} the mixed topology is given by the fundamental system of seminorms 
\[
\widetilde{\Gamma}:=\left\{\widetilde{p}_{(a_n),(p_n)}:\ (p_n)_{n\in\mathbb{N}}\subseteq\Gamma,\ (a_n)_{n\in\NN}\in\mathrm{c}_0,\ a_n\geq0\right\}
\]
where
\[
\widetilde{p}_{(a_n),(p_n)}(x):=\sup_{n\in\NN}{a_np_n(x)},\quad x\in X,
\]
and will be denoted by $\gamma:=\gamma(\tau,\left\|\cdot\right\|)$. It is obvious, that $\tau\subseteq\gamma\subseteq\left\|\cdot\right\|$ holds and by \cite[Thm.~2.3.1]{W1961} one gets that convergent sequences with respect to the mixed topology are precisely the $\left\|\cdot\right\|$-bounded and $\tau$-convergent sequences relevant in our setting. Moreover, the classes of bi-continuous semigroups and $\gamma$-strongly continuous and locally sequentially $\gamma$-equicontinuous semigroups coincide, cf. \cite[Prop.~1.6]{F2011Cz}.

\medskip

We will now apply \cite[Thm.~3.14]{AJ2016} to the mixed topology $\gamma$.

\begin{theorem}\label{thm:LP2}
Let $(A,\dom(A))$ be a bi-dissipative and bi-densely defined operator on a bi-admissible space $(X,\|\cdot\|,\tau)$. Assume that $X$ is complete with respect to the mixed topology $\gamma$. If $\mathrm{Ran}(\lambda-A)$ is bi-dense in $X$ for some $\lambda>0$, then the $\gamma$-closure $(\overline{A},\dom(\overline{A}))$ of $(A,\dom(A))$ generates a bi-continuous contraction semigroup on $X$.
\end{theorem}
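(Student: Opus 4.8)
The plan is to push the entire problem into the locally convex space $(X,\gamma)$, where $\gamma=\gamma(\tau,\|\cdot\|)$ is the mixed topology with fundamental system $\widetilde{\Gamma}$, and then to invoke the locally convex Lumer--Phillips theorem \cite[Thm.~3.14]{AJ2016} there. The first step is to verify that $A$ is $\widetilde{\Gamma}$-dissipative, i.e.\ $\Gamma$-dissipative in the sense of \cite{AJ2016} with respect to $\widetilde{\Gamma}$. For a generating seminorm $\widetilde{p}=\widetilde{p}_{(a_n),(p_n)}=\sup_{n}a_np_n$ with $(p_n)_{n\in\NN}\subseteq\Gamma$ and $(a_n)_{n\in\NN}\in\mathrm{c}_0$, $a_n\geq0$, the bi-dissipativity estimate \eqref{eqn:taudiss} together with $a_n\geq0$ yields
\[
\widetilde{p}((\lambda-A)x)=\sup_{n\in\NN}a_np_n((\lambda-A)x)\geq\lambda\sup_{n\in\NN}a_np_n(x)=\lambda\widetilde{p}(x)
\]
for all $\lambda>0$ and $x\in\dom(A)$, so $A$ is $\widetilde{\Gamma}$-dissipative.

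Next I would transfer the remaining hypotheses of \cite[Thm.~3.14]{AJ2016} to $(X,\gamma)$. Since a sequence is $\gamma$-convergent if and only if it is norm-bounded and $\tau$-convergent (by \cite[Thm.~2.3.1]{W1961}), bi-denseness of $\dom(A)$ turns into $\gamma$-density of $\dom(A)$, and the hypothesis that $\mathrm{Ran}(\lambda-A)$ is bi-dense gives that $\mathrm{Ran}(\lambda-A)$ is $\gamma$-dense for the same $\lambda>0$; completeness of $(X,\gamma)$ is assumed outright. Thus \cite[Thm.~3.14]{AJ2016} applies and produces a $\gamma$-equicontinuous $\Cnull$-semigroup $(T(t))_{t\geq0}$ on $(X,\gamma)$ whose generator is the $\gamma$-closure $(\overline{A},\dom(\overline{A}))$, and the $\widetilde{\Gamma}$-dissipativity moreover yields the contraction estimate $\widetilde{p}(T(t)x)\leq\widetilde{p}(x)$ for all $\widetilde{p}\in\widetilde{\Gamma}$, $t\geq0$, $x\in X$.

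It then remains to translate the conclusion back into the bi-continuous setting. On the one hand $(T(t))_{t\geq0}$ is $\gamma$-strongly continuous and, being globally $\gamma$-equicontinuous, is in particular locally sequentially $\gamma$-equicontinuous; hence it is a bi-continuous semigroup by \cite[Prop.~1.6]{F2011Cz}. On the other hand every $p\in\Gamma$ is realized as $\widetilde{p}_{(a_n),(p_n)}$ with $a_1=1$, $a_n=0$ for $n\geq2$ and $p_1=p$, so the estimate above specializes to $p(T(t)x)\leq p(x)$; taking the supremum over $p\in\Gamma$ and using \eqref{eqn:taudiss-2} gives $\|T(t)x\|\leq\|x\|$, so $(T(t))_{t\geq0}$ is a norm-contraction semigroup. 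Finally, the norm-boundedness requirement on the difference quotients in Definition \ref{def:BiGen} is precisely what upgrades $\tau$-convergence to $\gamma$-convergence, so the bi-continuous generator coincides with the $\gamma$-generator $\overline{A}$.

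I expect the main obstacle to be the careful bookkeeping of this last paragraph rather than any single deep estimate: one must make sure that the object returned by \cite{AJ2016} for the mixed topology is genuinely bi-continuous \emph{and} norm-contractive, and, most delicately, that its \emph{bi-continuous} generator---defined through the $\tau$-limit together with the supremum condition of Definition \ref{def:BiGen}---really equals the $\gamma$-closure $\overline{A}$ and not some proper extension. This identification rests entirely on the dictionary between the mixed-topology $\Cnull$-framework and the bi-continuous framework furnished by \cite[Prop.~1.6]{F2011Cz} and the characterization of $\gamma$-convergence in \cite[Thm.~2.3.1]{W1961}.
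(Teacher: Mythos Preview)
Your proposal is correct and follows essentially the same route as the paper: verify $\widetilde{\Gamma}$-dissipativity by the same supremum computation, pass bi-density to $\gamma$-density via the sequential characterization of $\gamma$-convergence, invoke \cite[Thm.~3.14]{AJ2016} on $(X,\gamma)$, and then translate back. The only cosmetic difference is that the paper bundles your last paragraph---bi-continuity, norm-contractivity, and the identification of generators---into a single citation of \cite[Prop.~A.1.3]{FaPhD}, whereas you spell these out by hand using \cite[Prop.~1.6]{F2011Cz} and the embedding $\Gamma\hookrightarrow\widetilde{\Gamma}$ together with \eqref{eqn:taudiss-2}; your more explicit treatment of the generator identification is in fact a point the paper leaves implicit.
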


\begin{proof}
Firstly, we observe that if $(A,\dom(A))$ is bi-dissipative on a bi-admissible space $(X,\|\cdot\|,\tau)$, then the operator $(A,\dom(A))$ is $\widetilde{\Gamma}$-dissipative in the sense of \cite[Def.~3.9]{AJ2016}. To see this, let $(a_n)_{n\in\NN}$ be an arbitrary sequence with $a_n\to0$ and $(p_n)_{n\in\NN}$ also be arbitrary in $\Gamma$. By the assumption that $(A,\dom(A))$ is bi-dissipative we have $p((\lambda-A)x)\geq \lambda p(x)$ for all $p\in\Gamma$, $\lambda>0$ and $x\in\dom(A)$. Hence, we obtain
\[
\widetilde{p}_{(a_n,p_n)}((\lambda-A)x)=\sup_{n\in\NN}{a_np_n((\lambda-A)x)}\geq\sup_{n\in\NN}{\lambda a_np_n(x)}=\lambda\widetilde{p}_{(a_n,p_n)}(x),
\]
for all $x\in\dom(A)$, showing that $(A,\dom(A))$ is $\widetilde{\Gamma}$-dissipative with respect to a fundamental system of seminorms $\Gamma$. Furthermore, if $\mathrm{Ran}(\lambda-A)$ is bi-dense, then it is also $\gamma$-dense since by \cite[Lemma~A.1.2]{FaPhD} bi-density coincides with $\gamma$-sequential density. Moreover, $\gamma$-sequentially dense operators are always $\gamma$-dense. Now, we are able to apply \cite[Thm.~3.14]{AJ2016}. In fact, under the assumption that $X$ is complete with respect to the mixed topology $\gamma$, we conclude by \cite[Thm.~3.14]{AJ2016} that the $\gamma$-closure $(\overline{A},\dom(\overline{A}))$ of $(A,\dom(A))$ generates an equicontinuous strongly continuous semigroup on the locally convex space $(X,\gamma)$. By \cite[Prop.~A.1.3]{FaPhD} we get that this semigroup is a bi-continuous contraction semigroup.
\end{proof}

In general completeness can of course be lost if one moves over to a coarser topology. The completeness assumption in Theorem \ref{thm:LP2} is however met in many important examples:

\begin{remark}\label{rem:MixedTopProp}
\begin{iiv}
	\item Equip the space $\BC(\RR)$ of bounded continuous functions on the real line with the sup-norm $\left\|\cdot\right\|_\infty$ and the compact-open topology $\tau_{\mathrm{co}}$. Then $\BC(\RR)$ is complete with respect to the associated mixed topology $\gamma(\tau_{\mathrm{co}},\left\|\cdot\right\|_\infty)$, see for example \cite[Prop.~2.2(b)]{GK2001}.
	\item The space $\LLL(E)$ of bounded linear operators on a Banach space $E$ together with the operator norm $\left\|\cdot\right\|_{\LLL(E)}$ and the strong operator topology $\tau_{\mathrm{sot}}$ is also complete with respect to the mixed topology $\gamma(\tau_{\mathrm{sot}},\left\|\cdot\right\|_{\LLL(E)})$, cf. \cite[Chapter I, Prop.~1.14]{C1987}.
\end{iiv}	
\end{remark}

\section{Example: Bi-continuous semigroups for flows on infinite networks}\label{SEC:3}
\subsection{An alternative characterization of bi-dissipative operators}
In this section, we want to characterize bi-dissipative operators in an alternative way. In the case of $\Cnull$-semigroups, this is done by the so-called duality set, cf. \cite[Chapter II, Prop.~3.23]{EN}. For bi-continuous semigroups, we will make use of \cite[Prop.~4.2]{AJ2016} and characterize bi-dissipative operators by means of the so-called subdifferential which can be seen as the locally convex equivalent to the duality set. For $p\in\operatorname{cs}(X)$ and $x\in X$ one defines the set
\[
\mathcal{J}(x,p):=\left\{\varphi\in X':\ \mathrm{Re}\left\langle y,\varphi\right\rangle\leq p(y)\ \forall y\in X,\ \left\langle x,\varphi\right\rangle=p(x)\right\}.
\]
By the Hahn--Banach theorem $\mathcal{J}(x,p)$ is non-empty, see also \cite[Sect.~4]{AJ2016}. The following result characterizes bi-dissipative operators by means of the set $\mathcal{J}(x,p)$, cf. \cite[Prop.~4.2]{AJ2016}.

\begin{proposition}\label{prop:DissSubDiff}
Let $(A,\dom(A))$ be an operator on a bi-admissible space $(X,\|\cdot\|,\tau)$. Then the following assertions are equivalent:
\begin{abc}
	\item $(A,\dom(A))$ is bi-dissipative.
	\item There exists a fundamental system of seminorms $\Gamma$ satisfying \eqref{eqn:normingProp} and such that for all $p\in\Gamma$ and $x\in\dom(A)$ there exists $\varphi\in\mathcal{J}(x,p)$ such that $\mathrm{Re}\left\langle Ax,\varphi\right\rangle\leq0$.
\end{abc}
\end{proposition}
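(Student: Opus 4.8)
The plan is to reduce the statement to the duality characterization of $\Gamma$-dissipativity already available in \cite{AJ2016}. Once a fundamental system $\Gamma$ is fixed, the condition \eqref{eqn:taudiss} is \emph{verbatim} the $\Gamma$-dissipativity of \cite[Def.~3.9]{AJ2016}, the norming requirement \eqref{eqn:normingProp} simply being carried along on the side. Hence I would apply \cite[Prop.~4.2]{AJ2016} to this single fixed $\Gamma$: it yields the equivalence between \eqref{eqn:taudiss} and the assertion that for every $p\in\Gamma$ and $x\in\dom(A)$ there is $\varphi\in\mathcal{J}(x,p)$ with $\mathrm{Re}\langle Ax,\varphi\rangle\le0$. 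Quantifying both sides over the \emph{norming} fundamental systems then gives exactly the equivalence (a)$\Leftrightarrow$(b), since bi-dissipativity is precisely $\Gamma$-dissipativity for some norming $\Gamma$.

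For completeness I would also spell out the two implications directly, as the arguments are short. The implication (b)$\Rightarrow$(a) is a one-line computation: for $\lambda>0$, $p\in\Gamma$ and $x\in\dom(A)$ choose $\varphi\in\mathcal{J}(x,p)$ with $\mathrm{Re}\langle Ax,\varphi\rangle\le0$; since $\mathrm{Re}\langle y,\varphi\rangle\le p(y)$ for all $y$ and $\langle x,\varphi\rangle=p(x)$ is real, one estimates
\[
\lambda p(x)=\lambda\,\mathrm{Re}\langle x,\varphi\rangle=\mathrm{Re}\langle(\lambda-A)x,\varphi\rangle+\mathrm{Re}\langle Ax,\varphi\rangle\le\mathrm{Re}\langle(\lambda-A)x,\varphi\rangle\le p((\lambda-A)x),
\]
which is \eqref{eqn:taudiss}. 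The norming property \eqref{eqn:normingProp} is part of the hypothesis in (b), so nothing further is required.

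For (a)$\Rightarrow$(b) I fix $p\in\Gamma$ and $x\in\dom(A)$. For each $\lambda>0$ the Hahn--Banach theorem provides $\psi_\lambda\in\mathcal{J}((\lambda-A)x,p)$; replacing $y$ by $\ee^{\mathrm{i}\theta}y$ in the defining inequality and using that $p$ is a seminorm shows $|\langle y,\psi_\lambda\rangle|\le p(y)$ for all $y\in X$. Inserting $\langle(\lambda-A)x,\psi_\lambda\rangle=p((\lambda-A)x)\ge\lambda p(x)$ into the identity $\mathrm{Re}\langle(\lambda-A)x,\psi_\lambda\rangle=\lambda\,\mathrm{Re}\langle x,\psi_\lambda\rangle-\mathrm{Re}\langle Ax,\psi_\lambda\rangle$ and rearranging yields simultaneously
\[
\mathrm{Re}\langle Ax,\psi_\lambda\rangle\le\lambda\bigl(\mathrm{Re}\langle x,\psi_\lambda\rangle-p(x)\bigr)\le0
\quad\text{and}\quad
\mathrm{Re}\langle x,\psi_\lambda\rangle\ge p(x)-\tfrac{1}{\lambda}p(Ax),
\]
so that $\mathrm{Re}\langle x,\psi_\lambda\rangle\to p(x)$ as $\lambda\to\infty$, while $\mathrm{Re}\langle Ax,\psi_\lambda\rangle\le0$ throughout.

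The decisive step is then a compactness argument. Because $|\langle y,\psi_\lambda\rangle|\le p(y)$ places each $\psi_\lambda$ in the polar of the $\tau$-neighbourhood $\{y:p(y)\le1\}$, the Alaoglu--Bourbaki theorem makes this polar $\sigma((X,\tau)',X)$-compact, and I would extract a weak$^*$-convergent subnet $\psi_{\lambda_\alpha}\to\varphi$. Passing to the limit preserves $|\langle y,\varphi\rangle|\le p(y)$ (hence $\mathrm{Re}\langle y,\varphi\rangle\le p(y)$) and $\mathrm{Re}\langle Ax,\varphi\rangle\le0$, while $\mathrm{Re}\langle x,\varphi\rangle=p(x)$ combined with $|\langle x,\varphi\rangle|\le p(x)$ forces $\langle x,\varphi\rangle=p(x)$; thus $\varphi\in\mathcal{J}(x,p)$ with $\mathrm{Re}\langle Ax,\varphi\rangle\le0$, and the $\Gamma$ coming from bi-dissipativity is already norming. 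I expect this limit extraction to be the only genuine obstacle: one must confirm that the functionals remain in the dual of $(X,\tau)$ — which they do, since domination by the $\tau$-continuous seminorm $p$ forces $\tau$-continuity — and that all three constraints defining $\mathcal{J}(x,p)$ survive the weak$^*$-limit.
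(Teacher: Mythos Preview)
Your proposal is correct and coincides with the paper's approach: the paper does not give an independent proof but simply invokes \cite[Prop.~4.2]{AJ2016} after observing that bi-dissipativity is precisely $\Gamma$-dissipativity for some norming fundamental system $\Gamma$, which is exactly your first paragraph. The detailed direct arguments you supply for (a)$\Leftrightarrow$(b) are correct and amount to a faithful unpacking of the Albanese--Jornet proof that the paper leaves implicit.
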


In the previous Proposition \ref{prop:DissSubDiff} we added the bi-admissibility for the sake of completeness and to stay self-contained. However, only the locally convex topology plays an actual role in that statement. 

\subsection{Flows on infinite networks}
We will use Proposition \ref{prop:DissSubDiff} in order to improve the result \cite[Thm.~3.17]{BK2019} concerning bi-continuous semigroups for flows in infinite networks. In fact, we show that the generation result mentioned there also holds for infinite networks and that the restriction to finite networks is not longer necessary.

\medskip

For the general set-up for flows on networks we refer to the work of Kramar Fijav\v{z} and Dorn et al. \cite{KS2005,DKNR2010,D2008}. Even though they discuss everything in the world of $C_0$-semigroups, the structure and notation is still the same for the bi-continous case. Notice that their work makes use of the Banach spaces $X:=\mathrm{L}^1\left(\left[0,1\right],\CC^n\right)$ or $X:=\mathrm{L}^1\left(\left[0,1\right],\ell^1\right)$, whereas in the bi-continuous setting one considers $X:=\mathrm{L}^\infty\left(\left[0,1\right],\ell^1\right)$. For the sake of completeness we will recall some important details from \cite{BK2019}.

\medskip 
Let $G=(V,E)$ be a graph with the set of vertices $V=\left\{\vv_i\mid i\in I\right\}$ and the set of {directed edges} $E=\left\{\ee_j\mid  j\in J\right\}\subseteq V\times V$ for some countable sets $I,J\subseteq\NN$. For a directed edge $\ee=(\vv_i,\vv_k)$ we call $\vv_i$ the tail and  $\vv_k$ the {head} of $\ee$. Moreover, the edge $\ee$ is an {outgoing edge} of the vertex $\vv_i$ and an {incoming edge} for the vertex $\vv_k$. In what follows, we assume that the graph $G$ is {simple} and \emph{locally finite}, i.e., there are no loops or multiple edges and each vertex only has finitely many {incident} edges. The graph $G$ is supposed to be {weighted}, that is we are giving numbers $0 \le  w_{ij} \le 1$ such that
\begin{equation}\label{eqn:noAbsorb}
\sum_{i\in J}{w_{ij}}=1 \text{ for all } j\in J.
\end{equation}
The structure of a graph can be described by its incidence and/or adjacency matrices. We shall only use the so-called {weighted (transposed) adjacency matrix $\mathbb{B} = (\mathbb{B}_{ij})_{i,j\in J}$ of the line graph}  defined as
\begin{equation}\label{eqn:adjMat}
\mathbb{B}_{ij}:=\begin{cases}
w_{ij}& \text{ if }\stackrel{\ee_j}{\longrightarrow}\vv\stackrel{\ee_i}{\longrightarrow},\\
0 & \text{ otherwise}.
\end{cases}
\end{equation}
By \eqref{eqn:noAbsorb}, the matrix $\mathbb{B}$ is column stochastic and defines a bounded positive operator on $\ell^1$ with spectral radius $r(\mathbb{B})= \|\mathbb{B}\|=1$. Let us identify every edge of our graph with the unit interval,  $\ee_j\equiv\left[0,1\right]$ for each $j\in J$, and parametrize it contrary to its direction, so that it is assumed to have its tail at the endpoint $1$ and its head at the endpoint $0$. For simplicity we use the notation $\ee_j(1)$ and $\ee_j(0)$ for the tail and the head, respectively. In this way we obtain a {metric graph}. On such an infinite network, we model a transport process (or a flow) along the edges of the corresponding metric graph $G$.  The distribution of material along  edge $\ee_j$ at time $t \ge0$ is described by the function $u_j(x,t)$ for $x\in [0, 1]$. The material is transported along edge $\ee_j$ with constant velocity $c_j>0$ {and is absorbed according to the absorption rate $q_j(x)$.} We assume that {$q\in\L^{\infty}\left(\left[0,1\right],\ell^{\infty}\right)$ and }
\begin{equation}\label{as:c-lim}
0< c_{\min} \le c_j \le c_{\max} < \infty
\end{equation}
for all $j\in J$.  Let  $C:=\diag(c_j)_{j\in J}$ be a diagonal velocity matrix and define another weighted adjacency matrix of the line graph by 
\begin{equation*}\mathbb{B}^C:=C^{-1}\mathbb{B} C.\end{equation*}
In the vertices the material gets redistributed according to some prescribed rules. This is modelled by the boundary conditions making use of the adjacency matrix $\mathbb{B}^C$. The flow process on $G$ is thus given by the following infinite system of equations
\begin{align}\label{eqn:F}
\begin{cases}
\frac{\partial}{\partial t}u_j(x,t)=c_j\frac{\partial}{\partial x}u_j(x,t) {+ q_j(x)u_j(x,t)},&\quad x\in\left(0,1\right),\ t\geq0,\\
u_j(1,t)= \sum_{k\in J} \mathbb{B}^C_{jk} u_k(0,t),&\quad t\geq0,\\
u_j(x,0)= f_j(x),&\quad x\in\left(0,1\right),
\end{cases}
\end{align}
for every $j\in J$, where $f_j(x)$ are the initial distributions along the edges. It has been proved in \cite[Sect.~3.1]{BK2019} that \eqref{eqn:F} can actually been rewritten as an abstract Cauchy problem on the space $\mathrm{L}^\infty\left(\left[0,1\right],\ell^1\right)$ associated to the operator $(A,\dom(A))$ defined by
\begin{align}\label{eqn:OpNetBiCont}
A=\mathrm{diag}\left(c_j\cdot\frac{\dd}{\dd{x}}\right),\quad \dom(A)=\left\{f\in\mathrm{W}^{1,\infty}\left(\left[0,1\right],\ell^1\right):\ f(1)=\mathbb{B}^Cf(0)\right\}.
\end{align}
The obvious goal is to show that the operator $(A,\dom(A))$ generates a bi-continuous semigroup on the space $X:=\mathrm{L}^\infty\left(\left[0,1\right],\ell^1\right)$. The space $X$ becomes a Banach space if equipped by the essential sup-norm
\[
\left\|f\right\|_X:=\esssup_{s\in\left[0,1\right]}{\left\|f(s)\right\|_{\ell^1}}.
\] 
Moreover, $X$ can be endowed with the weak$^*$-topology $\tau_{\mathrm{w}^*}$, as $X$ can be seen as the dual space of $Y:=\Ell^1\left(\left[0,1\right],\mathrm{c}_\mathrm{0}\right)$, and a dual space equipped with the weak$^*$-topology always satisfies Assumption \ref{asp:bicontspace}, see \cite[Ex.~1.3]{F2011Cz}. By \cite[Lemma~3.16]{BK2019} the operator $(A,\dom(A))$ is closed since the resolvent is non-empty and $\lambda-A$ is surjective for all $\lambda>0$. Moreover, by following the first part of the proof of \cite[Thm.~3.17]{BK2019} one can see that this operator is also bi-densely defined. In order to conclude that $(A,\dom(A))$ generates a bi-continuous semigroup we want to apply Theorem \ref{thm:LP1}. For this purpose it suffices to show that $(A,\dom(A))$ is bi-dissipative. 

\begin{lemma}
The operator $(A,\dom(A))$ defined by \eqref{eqn:OpNetBiCont} is bi-dissipative.
\end{lemma}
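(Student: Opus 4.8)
The plan is to establish bi-dissipativity through the subdifferential characterization of Proposition~\ref{prop:DissSubDiff}(b): it suffices to produce a fundamental system of seminorms $\Gamma$ for $\tau_{\mathrm{w}^*}$ that is norming in the sense of \eqref{eqn:normingProp}, together with, for every $p\in\Gamma$ and every $f\in\dom(A)$, a functional $\varphi\in\mathcal{J}(f,p)$ satisfying $\mathrm{Re}\langle Af,\varphi\rangle\le0$. Since here $\tau=\tau_{\mathrm{w}^*}$ is the weak$^*$-topology of $X=Y'$ with $Y=\mathrm{L}^1([0,1],\mathrm{c}_0)$, one has $(X,\tau_{\mathrm{w}^*})'=Y$, so every functional occurring in $\mathcal{J}(f,p)$ is an element $g\in Y$ acting by $\langle f,g\rangle=\int_0^1\sum_{j\in J}f_j(x)g_j(x)\,\dd x$.

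First I would record the two structural inputs. On the one hand, each $f\in\dom(A)\subseteq\mathrm{W}^{1,\infty}([0,1],\ell^1)$ is Lipschitz, hence continuous as an $\ell^1$-valued map, so the traces $f(0),f(1)\in\ell^1$ exist and $f(1)=\mathbb{B}^Cf(0)$ holds literally. On the other hand, the numerical identities driving the estimate are $c_j\mathbb{B}^C_{jk}=c_k\mathbb{B}_{jk}$ (immediate from $\mathbb{B}^C=C^{-1}\mathbb{B}C$) and the column-stochasticity $\sum_{j\in J}\mathbb{B}_{jk}=1$ coming from \eqref{eqn:noAbsorb} and \eqref{eqn:adjMat}. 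Integrating by parts in $x$ against a sufficiently regular $g$ and substituting the boundary condition then gives
\[
\langle Af,g\rangle=\sum_{k\in J}c_kf_k(0)\bigl[(\mathbb{B}^{\top}g(1))_k-g_k(0)\bigr]-\int_0^1\sum_{j\in J}c_jf_j(x)g_j'(x)\,\dd x,
\]
which isolates exactly the boundary contribution that must be signed.

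The heart of the matter is the coupled choice of $\Gamma$ and of the subgradient $\varphi$, guided by the contraction estimate that already works in the $\mathrm{L}^1$-world: testing $Af$ against $\operatorname{sign}(f)$ produces the telescoping boundary term $\sum_{j}c_j(|f_j(1)|-|f_j(0)|)$, which is $\le0$ because $\sum_{j}c_j|f_j(1)|\le\sum_{k}c_k|f_k(0)|\sum_{j}\mathbb{B}_{jk}=\sum_{k}c_k|f_k(0)|$ by the two identities above. I would therefore build $\Gamma$ as a norming family of weak$^*$-continuous seminorms whose associated subgradients $\varphi\in\mathcal{J}(f,p)\subseteq Y$ are positive, $x$-regular and sign-aligned with $f$, so that they serve as genuine $\mathrm{c}_0$-valued replacements of $\operatorname{sign}(f)$ and render the bracket $(\mathbb{B}^{\top}g(1))_k-g_k(0)$ harmless after pairing with $c_kf_k(0)$; regularity of $g$ costs nothing for the norming property by density in $B_Y$. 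Once bi-dissipativity is secured, the lemma is done, and together with the closedness, bi-density and surjectivity of $\lambda-A$ noted before it, Theorem~\ref{thm:LP1} yields the generation result.

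The hard part will be precisely the passage from finite to infinite networks, i.e.\ reconciling weak$^*$-continuity of the members of $\Gamma$ with the essential-supremum structure of $\|\cdot\|_X$ on $\mathrm{L}^\infty([0,1],\ell^1)$. Because the predual fibre is $\mathrm{c}_0$ and not $\ell^\infty$, the constant ``all-ones'' sign pattern, which trivializes the boundary term in the finite case, is not an element of $Y$, so the naive norming functional is simply unavailable; this is exactly where \cite[Thm.~3.17]{BK2019} required finiteness. I expect the decisive step to be a careful approximation: one must select, for each seminorm in the norming family and each $f\in\dom(A)$, a finitely supported subgradient whose boundary error is absorbed by the column-stochasticity of $\mathbb{B}$ uniformly enough that $\mathrm{Re}\langle Af,\varphi\rangle\le0$ persists in the limit, all while keeping $\Gamma$ norming for $\|\cdot\|_X$.
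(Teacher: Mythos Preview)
Your overall strategy coincides with the paper's: both invoke Proposition~\ref{prop:DissSubDiff}, both pair $Af$ against a sign-type functional, and both rely on $c_j\mathbb{B}^C_{jk}=c_k\mathbb{B}_{jk}$ together with the column-stochasticity of $\mathbb{B}$ to kill the boundary term. Your integration-by-parts identity is exactly the computation underlying the paper's four-line estimate.

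Where you diverge is in what you regard as the ``hard part.'' Your write-up is explicitly a plan, not a proof: you never construct the norming family $\Gamma$ nor the subgradients $\varphi\in\mathcal{J}(f,p)\subseteq Y=\mathrm{L}^1([0,1],\mathrm{c}_0)$, and you flag the $\mathrm{c}_0$-versus-$\ell^\infty$ obstruction as unresolved. The paper, by contrast, performs no approximation whatsoever. It fixes an arbitrary $p$ in a fundamental system $\Gamma$, sets $\chi:=(\chi_{\{f_n\neq0\}})_{n\in\NN}$, asserts $\chi\in\mathcal{J}(f,p)$, and then bounds $\mathrm{Re}\langle Af,\chi\rangle$ directly, finishing with $\langle f(0),(\mathbb{B}_C^{*}-I)C\mathbf{1}\rangle=0$. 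In other words, precisely the ``all-ones'' sign pattern that you rule out as unavailable in the predual is what the published proof uses, without further comment and independently of $p$.

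So your proposal is not wrong in spirit, but as it stands it is not a proof: the decisive construction you announce is never carried out. To align with the paper you would simply take $\varphi=\chi$ and run the short computation. If, on the other hand, you maintain that $\chi$ need not lie in $Y$ (and hence not in $\mathcal{J}(f,p)$) when infinitely many $f_n$ are nonzero, then what you have uncovered is a gap in the paper's argument rather than in your own plan---but in that case the burden is on you to actually execute the finitely supported approximation you sketch, since nothing in the paper supplies it.
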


\begin{proof}
We use Proposition \ref{prop:DissSubDiff}. From \cite[Lemma~23.2]{MV1992} we obtain that $(X,\tau_{\mathrm{w}^*})'=(Y',\sigma(Y',Y))'=Y=\Ell^1\left(\left[0,1\right],\mathrm{c}_\mathrm{0}\right)$.

\medskip
Now let $\Gamma$ be a fundamental system for $\tau_{\mathrm{w}^*}$ and $p\in\Gamma$ and $f\in\dom(A)$. We define $\chi:=\left(\chi_{\left\{f\neq0\right\}}\right)=\left(\chi_{\left\{f_n\neq0\right\}}\right)_{n\in\NN}$, where $\chi_{\left\{f\neq0\right\}}$ denotes the characteristic function of the set $\left\{f\neq0\right\}$, and observe that actually $\chi\in J(f,p)$, so we only have to show that $\mathrm{Re}\left\langle Af,\chi\right\rangle\leq0$. Making use of \eqref{eqn:OpNetBiCont}, we obtain 
\begin{align*}
\mathrm{Re}\left\langle Af,\chi\right\rangle&\leq\left|\int_0^1{\left\langle (Af)(x),\chi_{\left\{f\neq0\right\}}(x)\right\rangle\ \dd{x}}\right|\leq\int_0^1{\sum_{n\in\NN}{\left|c_nf'_n(x)\chi_{\left\{f_n\neq0\right\}}(x)\right|}\ \dd{x}}\\
&=\sum_{n\in\NN}{\int_0^1{\left|c_nf'_n(x)\chi_{\left\{f\neq0\right\}}(x)\right|}\ \dd{x}}=\left\langle Cf(1)-Cf(0),\textbf{1}\right\rangle\\
&\leq\left\langle \mathbb{B}_Cf(0)-f(0),C\textbf{1}\right\rangle=\left\langle f(0),(\mathbb{B}_C^*-I)C\textbf{1}\right\rangle=0,
\end{align*} 
which concludes the proof.
\end{proof}

\section*{Acknowledgement}
The first author was funded by the DAAD-TKA Project 308019 ``\emph{Coupled systems and innovative time integrators}'' and want to acknowledges funding by the Deutsche Forschungsgemeinschaft (DFG, German Research Foundation) - 468736785. Both authors also want to thank the anonymous referee for the detailed feedback and Jochen Gl\"uck for pointing out an error in an earlier version of Example \ref{ex:heatsemi}.

\bibliographystyle{abbrv}
\bibliography{LPBiCont}

\end{document}